\newtheorem{thm}{Theorem}
\newtheorem{cor}[thm]{Corollary}
\newtheorem{lem}[thm]{Lemma}
\theoremstyle{definition}
\newtheorem{defn}[thm]{Definition}
\theoremstyle{definition}
\theoremstyle{definition}
\theoremstyle{definition}
\theoremstyle{definition}
\theoremstyle{definition}
\theoremstyle{definition}
\numberwithin{thm}{section}
\newcommand{\R}{\ensuremath{\mathbb{R}}}
\newcommand{\C}{\ensuremath{\mathbb{C}}}
\def\p{\partial}
\def\i{\infty}
\def\e{\epsilon}
\def\supp{\it{supp}}
\def\l{\lambda}
\def\o{\omega}
\def\B{\mathcal{B}}
\def\hB{\hat{\mathcal{B}}}
\def\P{\mathcal{P}}
\def\cal{\mathcal}
\def\rotateminus{\reflectbox{\rotatebox[origin=c]{155}{\hspace{.6pt}-}}}
\def\Xint#1{\mathchoice
{\XXint\displaystyle\textstyle{#1}}%
{\XXint\textstyle\scriptstyle{#1}}%
{\XXint\scriptstyle\scriptscriptstyle{#1}}%
{\XXint\scriptscriptstyle\scriptscriptstyle{#1}}%
\!\int}
\def\XXint#1#2#3{{\setbox0=\hbox{$#1{#2#3}{\int}$}
\vcenter{\hbox{$#2#3$}}\kern-.5\wd0}}
\def\cint{\Xint    \rotateminus }
\begin{document}
	\title[Cauchy Integral Theorems]{Generalizations of the Cauchy Integral Theorems}
\author[J. Harrison \& H. Pugh]{J. Harrison \\Department of Mathematics \\University of California, Berkeley\\ \\ H. Pugh \\  Department of Pure Mathematics and Mathematical Statistics\\Cambridge University}
\date{December 27, 2010}

\maketitle

\begin{abstract}
	We extend the Cauchy residue theorem to a large class of domains including differential chains that represent, via canonical embedding into a space of currents, divergence free vector fields and non-Lipschitz curves.  That is, while the classical Cauchy theorems involve integrals over piecewise smooth parameterized curves, these classical theorems actually hold for far more general notions of ``curve.'' We also extend the definition of winding number to these domains and show that it behaves as expected.  
\end{abstract}

\section{Introduction} % (fold)
\label{sec:introduction}

% section introduction (end)

%There have been several ways to define an integral of a differential form over a nonsmooth curve $\g$ in $\R^2$.   The classical method uses convolution, as in the Poisson integral for Lipschitz domains \cite{dahlberg}, and the Cauchy integral over Lipschitz curves \cite{calderon, coifman}. Others \cite{whitney, kats, norton1, norton2} have used approximations of $\g$ with smooth domains $P$ in a normed space, invoking what is essentially a Stokes' theorem trick in top dimension to extend the integral to $\g$ without using convolution. For example, if  $P$ bounds a bounded open set $U \subset \R^2$, then $\int_{P} \o = \int_{U} d \o$.  Since $d \o$ is bounded measurable if $\o$ is Lipschitz, by Rademacher's theorem, the integral on the right hand side is bounded by the Lipschitz constant times the volume of $U$.  However, if $\o$ has singularities, this approach can lead to difficulties.  

A systematic method is in place \cite{harrison1, continuity,  OCI, poincarelemma}  to treat pairs of  $k$-dimensional domains and differential $k$-forms in open subsets $U$ of  Riemannian $n$-manifolds $M$, for $0 \le k \le n$, chosen from locally convex spaces of ``differential chains'' $\hB(U)$ and differential forms $\B(U)$, with a jointly continuous integral $\cint_J \o$.  The integral is well-defined and canonical because of a de Rham isomorphism theorem at the level of cochains and forms 
 $\hB(U)' \cong \B(U)$  (Theorem 3.3.8 of \cite{OCI}), that passes to the classical de Rham isomorphism of cohomology classes.  Permitted boundaries include non-Lipschitz domains such as curves with countably many corners and cusps, stratified sets, and non-manifold smooth boundaries such as divergence free vector fields.

Differential chains $\hB(U)$ form a separable l.c.s. and its image under the canonical injection into its bidual is a proper subspace of currents $\B(U)'$ when $U=\R^n$ (and more generally, whenever $\B(U)$ itself is not reflexive \cite{topological}).  The advantage to using this subspace is that differential chains can be strongly approximated by polyhedral chains and pointed chains.

  In this paper we establish generalizations of the Cauchy Integral Theorem, the Cauchy Integral Formula, and the Cauchy Residue Theorem to such domains.  Central to our arguments are three chain operators pushforward $F_*$, boundary $\p$, and the cone operator $K$  used in the Poincar\'e Lemma for differential chains \cite{poincarelemma}.  These results first appeared as part of H.Pugh's senior thesis \cite{thesis}.  The authors would like to thank M.W. Hirsch for his feedback and suggestions.

\section{Integral of complex forms over differential chains} % (fold)
% \label{sec:preliminaries}
We recall basic terms defined in \cite{OCI}: Let $U \subset \R^n$ be open.  Then  $\B_k^r(U)$ is the subspace of real-valued differential forms defined on $U$ of class $C^r$ whose elements have a bound on each derivative of order $\le r$, and  $\B_k(U) = \varprojlim \B_k^r(U)$, is the Fr\'echet space of $C^\infty$ forms on $U$ whose derivatives are all bounded.   The space $\P_k(U)$ is that of ``pointed $k$-chains,'' sections of the $k$-th exterior power of the tangent bundle of $\R^n$ finitely supported in $U$.  The space $\hB_k^r(U)$ is the completion of $\P_k(U)$ with a norm described in \cite{OCI}, and $\hB_k(U) = \varinjlim \hB_k^r(U)$ is the space of \emph{differential $k$-chains} endowed with the direct limit topology. The spaces $\hB_k^r(U)$ and $\B_k^r(U)$ are defined independently and $(\hB_k^r(U))' \cong \B_k^r(U)$, yielding an integral $\cint_J \o := \o(J)$ for $J \in \hB_k^r(U)$ and $\o \in \B_k^r(U)$, or $J \in \hB_k(U), \o \in \B_k(U)$.  

The support of a differential chain $J$, denoted $\supp(J)$ is defined to be the smallest closed subset $K$ of $\R^n$ such that $\cint_J\o=0$ for any form $\o$ supported in $\R^n\setminus K$.  We will use extensively the property that any $J\in \hB_k^r(\R^n)$ can be considered as an element of $\hB_k^r(U)$ where $U$ is any neighborhood of $\supp(J)$.  In particular, if $J$ is supported in $U$, then $J\in \hB_k^r(W)$, where $W$ is a neighborhood of $\supp(J)$, and the closure of $W$ is contained in $U$.

Our goal in this paper is to generalize the Cauchy residue theorem by generalizing the domains of integration, which are classically piecewise smooth parameterized closed curves.  These curves are examples of differential chains in the space $\hat{\cal{B}}_1(\C)$ (where $\C$ is treated in this context as the real vector space $\R^2$.) By virtue of the topological vector space $\B$ of forms we will work with, we need to stay away from places where the function blows up, either at singularities in the plane, or at infinity.

In other words, we wish to integrate a complex differential 1-form $f(z)dz$ defined on some open subset $U$ of $\C$.  The only requirement for the domain $J\in \hB_1(U)$ is that if $f=u+iv$, where $u$ and $v$ are real-valued functions, then $u$ and $v$ restrict to elements of $\B_1(W)$ where $W$ is any neighborhood of $\supp(J)$.  This is satisfied, for example, if $f$ is holomorphic and $J$ is compactly supported in $U$.  Define\footnote{One may also extend the integral to complex chains $J + iK$ where $J, K \in \hB_k$ as follows $\cint_{J + iK} \o + i \eta = \cint_{J} \o - \cint_K \eta + i (\cint_K \o + \cint_J \eta)$, but a full treatment of complex chains (involving complexified tangent spaces and pointed chains) will appear in a sequel.}

	\[ \cint_J f(z)dz  :=  \cint_{J}\left( u(x,y) dx  -v(x,y)dy\right)+ i\cint_{J} \left(v(x,y) dx + u(x,y)dy\right). \]

% % section preliminaries (end)
	\section{Cauchy Integral Theorem for Differential Chains}
%Generalized Cauchy Integral Theorem

\begin{thm}{Cauchy Integral Theorem for Differential Chains}
\label{Generalized Cauchy Integral Theorem}

Let $U\subset \C$ be a bounded contractible open set, let $f: U\rightarrow \C$ be a holomorphic function, and let $J \in \hB_1(\C)$ be supported in $U$, with $\p J=0$.  Then
\[
\cint_J f(z)dz=0.
\]
\end{thm}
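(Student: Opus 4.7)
The plan is to split $f(z)\,dz$ into its real and imaginary parts, observe that each is a closed real 1-form on $U$ because $f$ is holomorphic, obtain global primitives on $U$ via the Poincar\'e lemma (using contractibility), and then apply Stokes' theorem for differential chains together with the hypothesis $\p J = 0$.

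Write $f = u+iv$, so that $f(z)\,dz = \alpha + i\beta$ with $\alpha := u\,dx - v\,dy$ and $\beta := v\,dx + u\,dy$. A direct computation using the Cauchy--Riemann equations $u_x = v_y$ and $u_y = -v_x$ gives $d\alpha = -(u_y + v_x)\,dx\wedge dy = 0$ and $d\beta = (u_x - v_y)\,dx\wedge dy = 0$. Since $U$ is contractible, the classical Poincar\'e lemma for smooth forms produces real-valued $C^\infty$ functions $g,h : U \to \R$ with $dg = \alpha$ and $dh = \beta$.

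To invoke Stokes' theorem in the differential-chain framework, choose an open neighborhood $W$ of $\supp(J)$ whose closure $\overline{W}$ is compact and contained in $U$, so that $J \in \hB_1(W)$ as recalled in Section~2. On the compact set $\overline{W}$, the smooth functions $u,v,g,h$ and all their derivatives are bounded, so $g,h \in \B_0(W)$ and $\alpha,\beta \in \B_1(W)$. The adjointness of $d$ and $\p$ under the integral pairing (Stokes' theorem for differential chains in the framework of \cite{OCI}) then gives
\[
\cint_J \alpha = \cint_J dg = \cint_{\p J} g = 0,
\]
and the same argument applied to $h$ yields $\cint_J \beta = 0$. Combining these equalities with the definition of $\cint_J f(z)\,dz$ gives the desired vanishing.

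The main obstacle is purely book-keeping: one must verify that the primitives $g,h$, together with all their derivatives, satisfy the boundedness conditions defining $\B_0(W)$ so that they are admissible test functions against chains in $\hB_1(W)$. This is handled by shrinking to a neighborhood $W$ with $\overline{W}$ compact in $U$ before extracting the primitives. The conceptual ingredients --- Cauchy--Riemann, Poincar\'e, and Stokes --- are classical once the correct function-space home for $g$ and $h$ has been secured.
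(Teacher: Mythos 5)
Your proof is correct, but it routes through the Poincar\'e lemma on the \emph{form} side rather than the \emph{chain} side, which is genuinely different from what the paper does. The paper applies the Poincar\'e lemma for differential chains (Theorem 7.0.16 of \cite{poincarelemma}) to produce a $2$-chain $K$ supported in $U$ with $J=\p K$, and then uses Stokes to convert $\cint_{\p K} f(z)\,dz$ into an integral of $d(f(z)\,dz)$ over $K$, which vanishes by Cauchy--Riemann. You instead observe that Cauchy--Riemann makes $\alpha$ and $\beta$ closed, extract primitives $g,h$ on the contractible set $U$ (equivalently, a holomorphic antiderivative $F=g+ih$ of $f$), and push the boundary onto the chain: $\cint_J dg = \cint_{\p J} g = 0$. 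Your version is more elementary in that it avoids the nontrivial chain-level Poincar\'e lemma entirely, needing only the classical existence of a primitive on a simply connected domain plus the duality form of Stokes; your book-keeping step of restricting to a neighborhood $W$ with $\overline{W}$ compact in $U$ so that $g,h\in\B_0(W)$ is exactly the right thing to check and is justified by the compactness of $\supp(J)$ inside the bounded open set $U$. What the paper's approach buys is reusability: the later Global Cauchy Integral Theorem replaces contractibility of $U$ with a winding-number hypothesis, under which a global primitive of $f$ need not exist (think of $1/z$ on an annulus), but a $2$-chain $K$ with $\p K=J$ supported in $U$ still does; the paper's argument then goes through verbatim, whereas yours would not extend to that setting.
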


\begin{proof}
  Apply Theorem 7.0.16 of \cite{poincarelemma}
to $J$ to get $J=\partial K$ where $K\in \hB_2(\C)$ supported in $U$.  By Stokes' Theorem for Differential Chains (Theorem 5.2.4 of \cite{OCI}), we get

\begin{align*}
\cint_J f(z)dz&=\cint_{\partial K} \left(u(x,y) dx  -v(x,y)dy\right)+ i\cint_{\partial K}\left( v(x,y) dx + u(x,y)dy\right)\\
&= \cint_K  \left(\frac{\p u}{\p y} + \frac{\p v}{\p x}\right) dydx + i \cint_K \left(\frac{\p v}{\p y} - \frac{\p u}{\p x}\right)dydx\\
&=0,
\end{align*}
where the final equality is given by the Cauchy-Riemann Equations.
\end{proof}

Theorem \ref{Generalized Cauchy Integral Theorem} implies the classical Cauchy Integral Theorem, because of the natural representation of a smooth curve as a differential 1-chain (\S 3.1 of \cite{OCI}).  But we can also integrate over more exotic domains such as non-rectifiable curves, Lipschitz curves, and divergence-free vector fields, again treating these objects as differential $1$-chains.  See  \S 3 of \cite{OCI} for examples of such domains.

So that we may state a generalized Cauchy residue theorem, we now give a definition of winding number for differential chains.

\section{Winding Number for Differential Chains}
%Generalized Winding Number
\begin{defn}[Winding Number for Differential Chains]
\label{Generalized Winding Number}	

Let $J\in \hB_1(\C)$, and let $z\in \supp(J)^c$.  Then the winding number of $J$ about $z$, $\textrm{Ind}_J(z)$ is defined to be
\[
\textrm{Ind}_J(z):=\frac{1}{2\pi i}\cint_J \frac{dw}{w-z}.
\]
\end{defn}

Note that $f(w)=\frac{1}{w-z}\in \B_0(U)$, where $U$ is any neighborhood of $\supp(J)$ whose closure does not contain $z$.   It follows that the above integral is well-defined.  Via the representations of classical domains (\S 3 \cite{OCI}), it follows that definition \ref{Generalized Winding Number} corresponds to the classical definition where it is defined.  That is, when the differential chain $J$ corresponds to a piecewise differentiable, parametrized, closed curve, the above integral is equal to its classical counterpart.  However, we need to check that our $\textrm{Ind}_J(z)$ behaves nicely when extended to differential chains in general.  Immediately we see on connected components of $\supp(J)^c$ that $\textrm{Ind}_J(z)$ is continuous. This follows since $\frac{1}{w-z_i}\rightarrow \frac{1}{w-z}$ in $\B_0(U)$ when $z_i\rightarrow z$, $z_i\in \supp(J)^c$, $z_i\notin \bar{U}$.  We will show further in Theorem \ref{Winding number constant on connected components} that if $\p J=0$, then $\textrm{Ind}_J(z)$ constant on connected components of $\supp(J)^c$, and in Corollary \ref{Winding number is zero on unbounded connected component} that if $J$ is closed and compactly supported, then $\textrm{Ind}_J(z)$ is zero on the unbounded connected component of $\supp(J)^c$. But first, we have an immediate result, a generalized version of the Cauchy Integral Formula:

\section{Cauchy Integral Formula for Differential Chains}
%Generalized Cauchy Integral Formula
\begin{thm}{Cauchy Integral Formula for Differential Chains}
\label{Generalized Cauchy Integral Formula}

Let $U\subset \C$ be a bounded contractible open set, $f: U\rightarrow \C$ holomorphic, $J\in \hB_1(\C)$ supported in $U$, and $z\in U\setminus\supp(J)$.  Then

\[
\textrm{Ind}_J(z)f(z)=\frac{1}{2\pi i}\cint_J \frac{f(w)}{w-z}dw.
\]
\end{thm}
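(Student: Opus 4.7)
The plan is to reduce the formula to the Cauchy Integral Theorem for Differential Chains (Theorem \ref{Generalized Cauchy Integral Theorem}) by applying it to the classical auxiliary function that removes the singularity at $z$. Concretely, I would split the integrand as
\[
\frac{f(w)}{w-z} \;=\; \frac{f(w)-f(z)}{w-z} \;+\; \frac{f(z)}{w-z},
\]
and integrate both sides against $J$. By linearity of $\cint_J$ and by Definition \ref{Generalized Winding Number}, the second term contributes exactly $f(z)\cdot 2\pi i \cdot \mathrm{Ind}_J(z)$, so the entire content of the theorem is the vanishing of $\cint_J \frac{f(w)-f(z)}{w-z}\,dw$.

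Next I would set $g(w) := \frac{f(w)-f(z)}{w-z}$ for $w\neq z$ and $g(z):=f'(z)$. Since $f$ is holomorphic on $U$, the local expansion $f(w)=f(z)+f'(z)(w-z)+O((w-z)^2)$ shows that $g$ extends to a holomorphic function on all of $U$ (removable singularity at $z$). Then I would apply Theorem \ref{Generalized Cauchy Integral Theorem} to the holomorphic function $g$ on the bounded contractible set $U$ and the chain $J$ (which I read as implicitly a cycle, $\p J=0$, as is needed even classically for the Cauchy Integral Formula to make sense), concluding $\cint_J g(w)\,dw = 0$. Combining with the previous paragraph and dividing by $2\pi i$ yields the desired identity.

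The only real technical point is to confirm that $g(w)\,dw$ is a legitimate form against which $J$ can be integrated in $\hB_1$. Using the fact recalled in Section 2 that $J$ supported in $U$ may be viewed as an element of $\hB_1(W)$ for any neighborhood $W$ of $\supp(J)$ with $\overline{W}\subset U$, and since $U$ is bounded, I can choose such $W$ with $\overline{W}$ compact. On $\overline{W}$ the holomorphic function $g$ is smooth with bounded derivatives of every order, so its real and imaginary parts lie in $\B_0(W)$ and the integral $\cint_J g(w)\,dw$ is well defined in the sense introduced in Section 2. The same check applied to $(w-z)^{-1}$ (using that $z\notin\overline{W}$ once $W$ is chosen small enough) is what already made $\mathrm{Ind}_J(z)$ well defined, so no new difficulty arises.

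I do not anticipate a serious obstacle: once the removable-singularity extension of $g$ is in hand, everything is a one-line invocation of the generalized Cauchy Integral Theorem plus bilinearity of $\cint$. The most delicate step, if any, is simply the bookkeeping that $g$ and $(w-z)^{-1}$ both restrict to elements of $\B_0(W)$ for a common neighborhood $W$ of $\supp(J)$ with $\overline{W}\subset U\setminus\{z\}$, which follows from bounded\-ness of $U$ and compactness of $\supp(J)$-neighborhoods inside $U$.
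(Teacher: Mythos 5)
Your proposal is correct and follows essentially the same route as the paper: extend $\frac{f(w)-f(z)}{w-z}$ holomorphically across $w=z$, apply the Cauchy Integral Theorem for Differential Chains to conclude that integral vanishes, and recover the formula from the definition of $\mathrm{Ind}_J(z)$. Your additional remarks --- that $\p J=0$ must be read as an implicit hypothesis and that $g$ restricts to a legitimate element of $\B_0(W)$ for a suitable neighborhood $W$ of $\supp(J)$ --- are sound observations that the paper leaves tacit.
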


\begin{proof}
	
The function
\[w\rightarrow
\begin{cases}
	\frac{f(w)-f(z)}{w-z} & \textrm{for } w\in U\setminus \{z\}\\
	f'(z)& \textrm{for } w=z
\end{cases}
\]
is holomorphic in $U$, so by the Cauchy Integral Theorem for Differential Chains,
\[
\cint_J \frac{f(w)-f(z)}{w-z} dw =0.
\]
The theorem follows from our definition of $\textrm{Ind}_J(z)$.
\end{proof}

\section{Properties of $\textrm{Ind}_J(z)$}
To show that $\textrm{Ind}_J(z)$ is well behaved, it is useful, if $J$ is closed, to approximate $J$ with a sequence of closed polyhedral chains\footnote{A polyhedral chain is a formal sum of weighted $k$-polyhedra.}.  Since the subspace of polyhedral chains is dense in $\hB_k(\R^n)$ (see Theorem 3.2.4 of \cite{OCI}), we know that $J$ can be approximated by polyhedral chains.  However, to insist that these polyhedral chains be closed is a strong statement that we cannot make just yet.  In fact, we will need something slightly stronger: we need the closed polyhedral chain approximation to avoid the point around which we are computing the winding number.  What follows in the next two lemmas is a proof of the existence of such a closed polyhedral chain approximation.

\begin{lem}\label{topdim}\footnote{Thanks to M. Hirsch for his help with this lemma.}
	Let $K\in \hB_m(S^m)$, where $S^m$ is the $m$-sphere.  If $\p K=0$, then $K=a \hat{S}^m$ for some $a\in \R$, where $\hat{S}^m\in \hB_m(S^m)$ denotes the differential chain canonically associated to $S^m$.
\end{lem}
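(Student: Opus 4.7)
The strategy is a duality argument: the classical fact that $H^m_{dR}(S^m)$ is one-dimensional, combined with Stokes' theorem for differential chains and the duality $\hB_m(S^m)' \cong \B_m(S^m)$, forces any closed top-dimensional chain on $S^m$ to be a scalar multiple of the fundamental class $\hat{S}^m$.

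First I would fix the standard volume form $\omega \in \B_m(S^m)$ (which lies in $\B_m$ since $S^m$ is compact, so all derivatives are automatically bounded). Set
\[
a := \frac{1}{\mathrm{Vol}(S^m)} \cint_K \omega,
\]
and define $K' := K - a \hat{S}^m$. Since $\hat{S}^m$ is closed ($S^m$ has empty boundary), we still have $\p K' = 0$, and by construction $\cint_{K'}\omega = 0$.

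Next I would show that $K'$ annihilates every form in $\B_m(S^m)$. Let $\o \in \B_m(S^m)$. Since $S^m$ has dimension $m$, automatically $d\o = 0$. By the classical de Rham theorem, $H^m_{dR}(S^m) \cong \R$ is generated by $[\omega]$, so we may write $\o = c\,\omega + d\eta$ for some $c \in \R$ and some smooth $(m-1)$-form $\eta$ on $S^m$. Applying Stokes' Theorem for differential chains (Theorem 5.2.4 of \cite{OCI}),
\[
\cint_{K'} \o \;=\; c \cint_{K'} \omega \;+\; \cint_{K'} d\eta \;=\; 0 \;+\; \cint_{\p K'} \eta \;=\; 0.
\]
Since the pairing $\hB_m(S^m) \times \B_m(S^m) \to \R$ is separating (as $\B_m(S^m) \cong \hB_m(S^m)'$ and $\hB_m(S^m)$ is a Hausdorff locally convex space, so Hahn-Banach applies), and $K'$ evaluates to zero against every form, we conclude $K' = 0$, i.e.\ $K = a\hat{S}^m$.

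The main technical point to verify is that the framework of differential chains extends cleanly from open subsets of $\R^n$ to the compact manifold $S^m$---in particular, that $\hat{S}^m \in \hB_m(S^m)$ is well-defined with $\p\hat{S}^m = 0$, that Stokes' theorem applies in this setting, and that the duality $\hB_m(S^m)' \cong \B_m(S^m)$ persists. These are foundational and should follow from the cited construction of $\hB$ on Riemannian manifolds; the genuine content of the lemma is then just the combination of classical de Rham on $S^m$ with the nondegeneracy of the chain--form pairing.
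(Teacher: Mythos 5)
Your proof is correct and takes essentially the same route as the paper: the paper embeds $K$ into the space of de Rham currents via the bidual and cites the fact that a closed top-degree current on $S^m$ is a scalar multiple of the fundamental current, which is precisely what your decomposition $\o = c\,\omega + d\eta$ together with Stokes' theorem and the separating chain--form pairing establishes. The only difference is that you prove that current-theoretic classification explicitly rather than citing it, which arguably makes your version more self-contained.
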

\begin{proof}
	The space $\hB_m(S^m)$ naturally embeds (see \cite{topological}) in the space of de Rham currents on $S^m$ via inclusion into the bidual, $\hB_m(S^m)\hookrightarrow (\B_m(S^m))'$.  Boundary commutes with this map, so we get a closed $m$-current $\hat{K}$ associated to $K$.  We know that in particular, $H_n^{dR}(S^n)\simeq \R$, and so a closed $m$-current on $S^n$ is unique up to scalar.  Hence $K=a \hat{S}^m$ for some $a\in \R$.
\end{proof}

%Existence of closed polyhedral chain away from our point lemma

\begin{lem}
	\label{Existence of closed polyhedral chain lemma}
	
	Let $J\in \hB_1(\C)$ be closed and compactly supported, and let $z\in \supp(J)^c$.  Then there exists $\epsilon > 0$ and a bounded open $W\subset \C$ such that
	\begin{enumerate}
		\item $W\cap B_\e(z)=\emptyset$, where $B_\epsilon (z)$ is the open $\epsilon$-ball about $z$,
		\item $J$ is supported in $W$,
		\item there exists a sequence of closed polyhedral chains $P_j \to J$ supported in $W$.
	\end{enumerate}
\end{lem}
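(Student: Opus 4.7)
The plan has three stages: choose the open set $W$ and radius $\epsilon$ satisfying conditions (1) and (2), approximate $J$ by polyhedral chains supported in $W$ (a priori not closed), and then correct each approximant to a closed chain by subtracting off a small polyhedral $1$-chain with matching boundary.

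First, since $\supp(J)$ is compact and $z \notin \supp(J)$, pick $\epsilon > 0$ with $\overline{B_{2\epsilon}(z)} \cap \supp(J) = \emptyset$, and $R$ large enough that $\supp(J) \subset B_R(0)$. Set $W := B_R(0) \setminus \overline{B_\epsilon(z)}$. Then $W$ is bounded and open, contains $\supp(J)$, and is disjoint from $B_\epsilon(z)$, giving (1) and (2). By density of polyhedral chains in $\hB_1(\C)$ (Theorem 3.2.4 of \cite{OCI}), together with the fact that the polyhedral approximations produced there may be arranged to have support in any prescribed open neighborhood of $\supp(J)$, we obtain polyhedral $Q_j \to J$ with $\supp(Q_j) \subset W$.

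The $Q_j$ need not be closed, but continuity of $\partial$ on $\hB$ gives $\partial Q_j \to \partial J = 0$ in $\hB_0$. The plan is then to build polyhedral $1$-chains $R_j$ supported in $W$ with $\partial R_j = \partial Q_j$ and $R_j \to 0$ in $\hB_1$; the sequence $P_j := Q_j - R_j$ will then consist of closed polyhedral chains supported in $W$ converging to $J$, as desired.

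Constructing $R_j$ is the main obstacle, because $W$ winds around $B_\epsilon(z)$ and is not simply connected, so no single cone operator can produce a primitive of $\partial Q_j$ globally. To get around this, cover $W$ by finitely many convex open sets $V_1,\ldots,V_n$ of uniformly bounded diameter and bounded covering multiplicity. Decompose the $0$-chain $\partial Q_j$ as $\sum_i \sigma_{j,i}$, where each $\sigma_{j,i}$ is a polyhedral $0$-chain supported in $V_i$ and of zero total weight on $V_i$. This is possible because $\partial Q_j$, being the boundary of a polyhedral $1$-chain supported in the connected set $W$, has total weight zero on $W$; the balancing between overlapping $V_i$ is carried out by transporting small weights along short polyhedral paths lying in $W$. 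On each $V_i$, the cone operator from a chosen base point sends $\sigma_{j,i}$ to a polyhedral $1$-chain $\rho_{j,i} \subset V_i$ with $\partial \rho_{j,i} = \sigma_{j,i}$ whose $\hB_1$-norm is controlled linearly by $\|\sigma_{j,i}\|_{\hB_0}$. Setting $R_j := \sum_i \rho_{j,i}$ then gives $\partial R_j = \partial Q_j$, and since $\partial Q_j \to 0$ in $\hB_0$, these linear estimates force $R_j \to 0$ in $\hB_1$, completing the construction.
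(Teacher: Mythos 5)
Your overall outline --- approximate $J$ by polyhedral chains supported in $W$, observe $\p Q_j \to 0$, and then subtract a small polyhedral filling $R_j$ of $\p Q_j$ --- is the same high-level strategy as the paper's, but your mechanism for producing $R_j$ is different and, as written, contains a genuine gap. The paper does not attempt a local decomposition of $\p K_j$: it radially projects each cell onto the circle of radius $2\e$ about $z$, closes each cell with two radial segments, recognizes the leftover term $\pi_*J$ as a closed $1$-chain on that circle (hence a scalar multiple of the circle, by Lemma \ref{topdim}), and controls the correction by the cone estimate $\|\mathrm{cone}_z(\p K_j)\|_{B^r} \le 2R \|\p K_j\|_{B^r}$ from the \poincare lemma paper. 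Everything there reduces to two citable bounded operators (pushforward and cone). Your route instead hinges on the claim that $\p Q_j$ can be decomposed as $\sum_i \sigma_{j,i}$ with each $\sigma_{j,i}$ supported in a convex $V_i$, individually of total weight zero, and with $\sum_i \|\sigma_{j,i}\|_{\hB_0}$ controlled by $\|\p Q_j\|_{\hB_0}$. That last estimate is the entire content of the step, and you do not prove it.

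Concretely, the phrase ``transporting small weights along short polyhedral paths'' is where the argument breaks. The mass of $\p Q_j$ is in general unbounded as $j \to \infty$ even though $\|\p Q_j\|_{\hB_0} \to 0$ (smallness of the $\hB^r_0$ norm for $r\ge 1$ comes from cancellation between nearby points of opposite weight, not from smallness of the weights), so the individual weights you propose to move between overlapping $V_i$'s need not be small, and a naive assignment of points to cover elements gives pieces $\sigma_{j,i}$ with no norm control at all. A repairable version of your idea exists --- localize with a partition of unity (using the boundedness of multiplication by a fixed smooth function on $\hB_0$, which is a theorem of \cite{OCI}, not a triviality), note that the \emph{total weight} of each localized piece is then bounded by $\|\p Q_j\|_{\hB_0}$ since it is the pairing with a bounded $0$-form, rebalance by moving only these small total weights along fixed $1$-chains in $W$, and finally invoke the boundedness of the local cone operator --- but each of these is a substantive step you have asserted rather than established. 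You should either supply these estimates or adopt the paper's projection-and-cone construction, which sidesteps the localization problem entirely (at the cost of needing Lemma \ref{topdim} to identify $\pi_*J$ as a multiple of the circle).
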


\begin{proof} Choose $\e > 0$ such that  $B_{4\e}(z) \cap \supp(J) = \emptyset$.  Since $J$ is compactly supported,  $(\supp (J) \cup B_{4\e}(z))\subset B_R(0) \subset \C$ for some finite $R$.   Let $U = B_R(0) -  \overline{B_{3\e}(z)}$ and  $W= B_R(0)- \overline{B_\e}(z)$.  By construction, $J$ is supported in $U$ (and $W$).  Indeed, $W$ satisfies (1) and (2).

	\begin{figure}[htbp]
		\centering
			\includegraphics[height=2in]{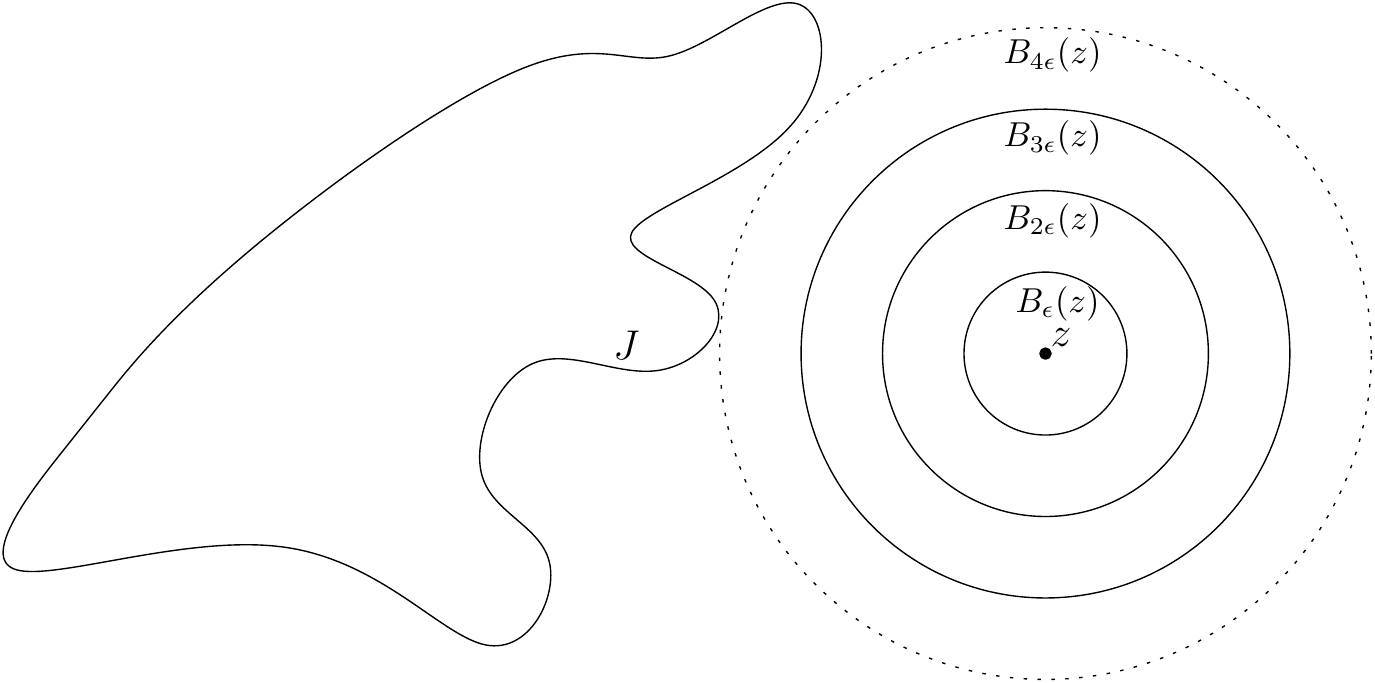}
		\caption{The setup}
		\label{fig:bullseye}
	\end{figure}

Let $K_j\rightarrow J$ be a sequence of polyhedral chains supported in $U$. We wish to replace $K_j$ with a sequence of closed polyhedral chains $P_j \to J$ supported in $W$.  Let $\pi$ be the projection of $\C\setminus \{z\}$ onto the  circle of radius $2\e$ about $z$,
\[
w\mapsto   2\e\frac{w-z}{\|w-z\|}+z.
\]

\begin{figure}[htbp]
	\centering
		\includegraphics[height=2.5in]{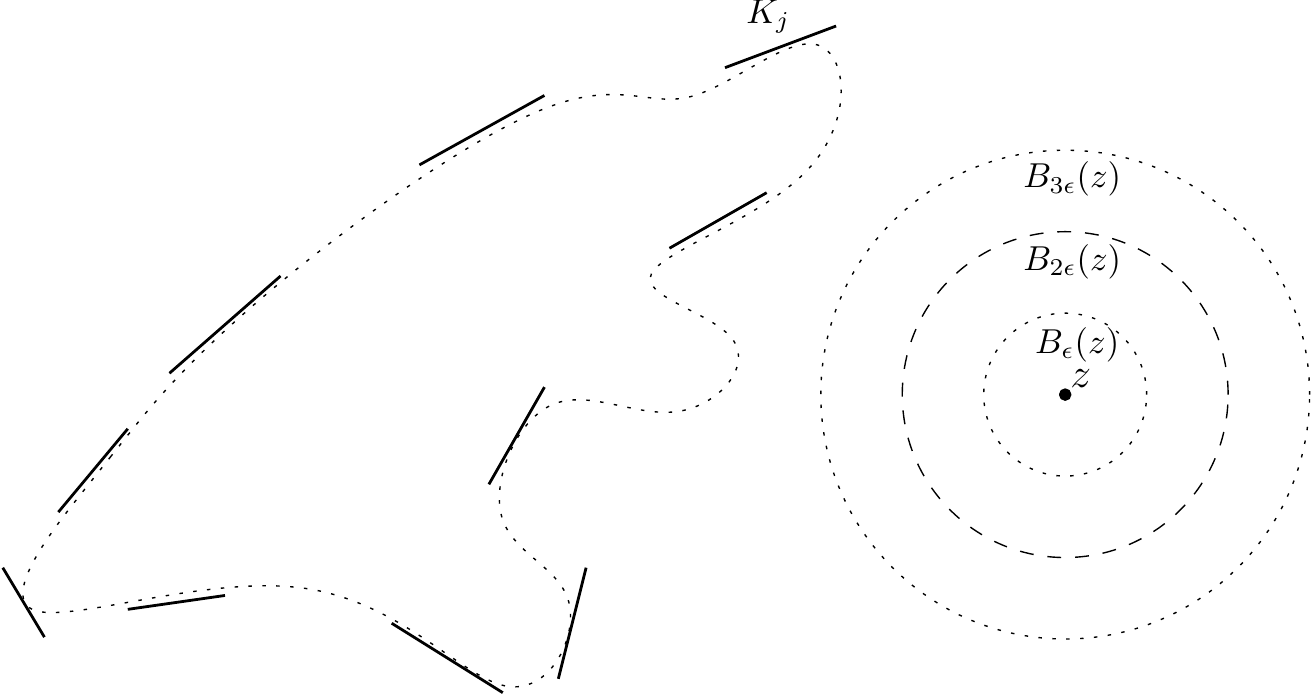}
	\caption{Starting with a polyhedral chain $K_j$}
	\label{fig:bullseye2}
\end{figure}
\newpage
Write $K_j=\sum_{j_m} k_{j_m}$, where the $k_{j_m}$'s are individual weighted 1-cells.  By splitting larger cells into smaller ones if necessary, we may assume without loss of generality that the lengths of the $k_{j_m}$'s are bounded by $1/j$.  Write $\p k_{j_m}=b_{j_m}-a_{j_m}$.  Since boundary commutes with pushforward,
\[
\p \pi_* k_{j_m}=\pi_* b_{j_m}-\pi_*a_{j_m}.
\]  

\begin{figure}[htbp]
	\centering
		\includegraphics[height=2.5in]{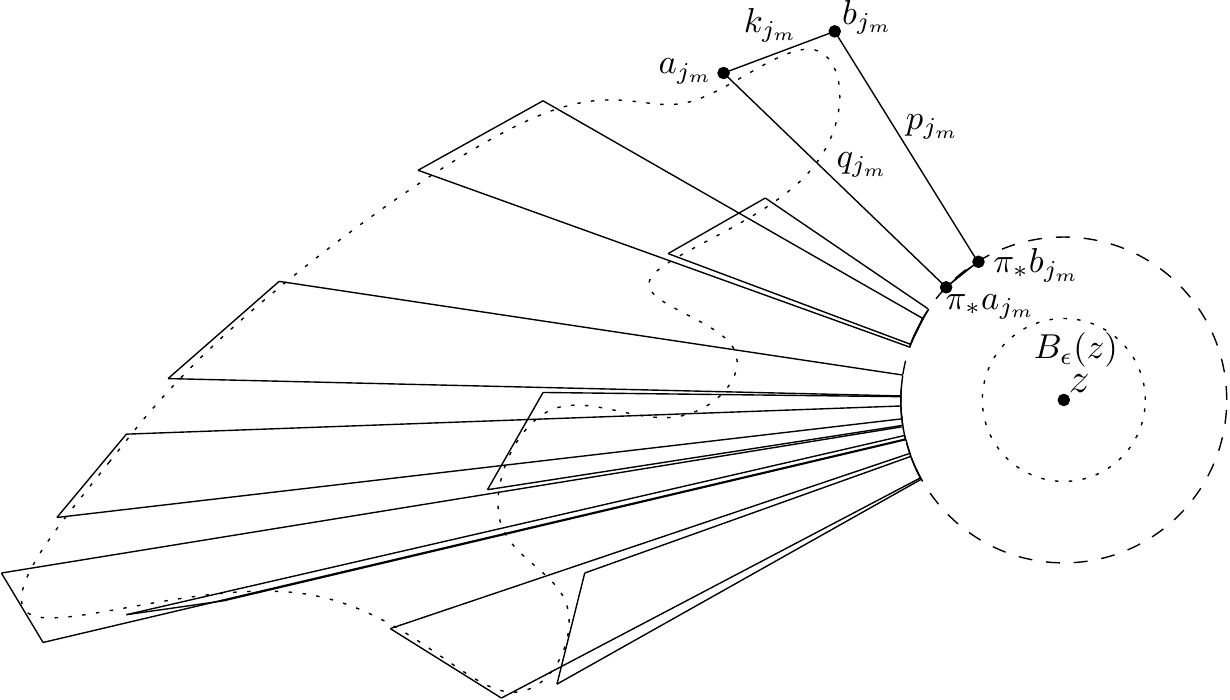}
	\caption{Modifying the $K_j$'s so that they become closed}
	\label{fig:bullseye3}
\end{figure}

We see that $a_{j_m}-\pi_* a_{j_m}$ bounds a cell $q_{j_m}$ and that $\pi_* b_{j_m}-b_{j_m}$ bounds a cell $p_{j_m}$.  Thus,
\[
r_{j_m}:=k_{j_m}-\pi_* k_{j_m}+q_{j_m}+p_{j_m}
\]

is a closed differential chain supported in $U$.  It is polyhedral, except for the term $\pi_* k_{j_m}$ which is a curved cell supported in the $2\e$-circle.   Let $R_j  = \sum_{}^{} r_{j_m}$. Then $\p R_j = 0$.  Since $\p$ is continuous, and $K_j \to J$,  we know $\p K_j \to \p J = 0$.  Since pushforward $\pi_*$ is linear and continuous on $\hB_1(U)$ (Corollary 10.1.3 of \cite{OCI}), $R_j  \to J - \pi_* J + \lim_{j \to \i} \sum q_{j_m}+p_{j_m}$. We next show that  $\lim_{j \to \i} \sum q_{j_m}+p_{j_m}  = 0$.  Since $K_j$ is compactly supported, we can write our truncated cone as a difference of two cones, namely,  $\textrm{cone}_z( \p K_j)- \textrm{cone}_z(\p \pi_* K_j)$.  Theorem 5.0.15 of \cite{poincarelemma} implies $\| \sum q_{j_m}+p_{j_m} \|_{B^r} \le 2R\|\p K_j\|_{B^r}  \to 0  $. 

Now we have a sequence of closed polyhedral chains $R_j$ supported in $W$ converging to the difference $J-\pi_*J$.  But since the image of $\pi$ is a circle, we have that $\pi_* J\in \hB_1(S^1)$, where $S^1$ here is embedded in $\C$ as the $2\e$-circle about $z$.  By Lemma \ref{topdim}, we know that $\pi_* J=a S$ where $a\in \R$.  Therefore,  $P_j = R_j + aS \to J$ and $P_j$ is closed.  Now $P_j$ is not quite polyhedral because some of its components are curved and supported on the $2\e$-circle, but we can easily replace such cells by straight ones that miss the $\e$-circle about $z$ due to the $1/j$ bound on the cells' length, and likewise can approximate $aS$ with a sequence of regular polygons. By our construction, the $P_j$ miss $B_\epsilon(z)$ for all $j>N$ for some $N$.
\end{proof}

\begin{thm}
	\label{Winding number constant on connected components}
	
If  $J\in\hB_1(\C)$ is closed and compactly supported, then $\mathrm{Ind}_J(z)$ is constant on connected components of $\supp(J)^c$.
\end{thm}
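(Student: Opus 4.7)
The plan is to reduce to the classical planar setting by combining the closed polyhedral approximation of Lemma \ref{Existence of closed polyhedral chain lemma} with continuity of the integral pairing, and thereby establish local constancy of $\mathrm{Ind}_J$ on $\supp(J)^c$ (which, for an open subset of $\C$, immediately implies constancy on connected components).

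Fix $z_0\in\supp(J)^c$ and apply Lemma \ref{Existence of closed polyhedral chain lemma} to produce $\e>0$, an open set $W$ with $\overline{W}\cap B_\e(z_0)=\emptyset$ (the construction in the lemma takes $W=B_R(0)-\overline{B_\e(z_0)}$, so one reads off immediately that $\overline{W}$ misses $B_\e(z_0)$), and a sequence of closed polyhedral $1$-chains $P_j\to J$ supported in $W$.  For every $z'\in B_\e(z_0)$ the form $\frac{dw}{w-z'}$ belongs to $\B_0(W)$, so by continuity of the integral pairing one has $\mathrm{Ind}_{P_j}(z')\to \mathrm{Ind}_J(z')$ pointwise for all $z'\in B_\e(z_0)$.

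Now each $P_j$ is a closed polyhedral $1$-chain disjoint from $B_\e(z_0)$.  By the standard flow/Eulerian decomposition for the weighted graph determined by the cycle condition $\p P_j=0$, one can write $P_j=\sum_i a_{j,i}L_{j,i}$ as a finite real linear combination of polygonal loops $L_{j,i}$ supported in $W$.  The classical planar winding number $\mathrm{Ind}_{L_{j,i}}(\cdot)$ is integer-valued and locally constant on the complement of $L_{j,i}$, so it is constant on the connected set $B_\e(z_0)$; by linearity $\mathrm{Ind}_{P_j}$ is constant on $B_\e(z_0)$ as well.  Passing to the pointwise limit, $\mathrm{Ind}_J$ is constant on $B_\e(z_0)$, which shows that $\mathrm{Ind}_J$ is locally constant at the arbitrary point $z_0$, proving the theorem.

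The one point that requires some care is the decomposition of a real-weighted closed polyhedral $1$-chain into polygonal loops supported in $W$: this is the standard combinatorial fact that a divergence-free flow on a finite weighted graph is a real linear combination of cycle flows, and the loops inherit supports in $W$ automatically.  Granted that, the argument is simply ``local constancy is preserved under pointwise limits,'' with the classical planar winding-number theory doing all the geometric work on the polyhedral approximants.
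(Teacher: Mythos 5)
Your proposal is correct and follows essentially the same route as the paper: approximate $J$ by the closed polyhedral chains of Lemma \ref{Existence of closed polyhedral chain lemma} avoiding $B_\e(z_0)$, decompose each approximant into weighted closed polygonal curves, invoke the classical local constancy of the winding number on $B_\e(z_0)$, and pass to the limit using continuity of the integral pairing. The only difference is cosmetic: you explicitly justify the loop decomposition of a closed real-weighted polyhedral $1$-chain via the cycle decomposition of divergence-free flows, a step the paper simply asserts.
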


\begin{proof}
	
	Let $z\in \supp(J)^c$, and $\e$, $W$ and $P_n \to J$ as in Lemma \ref{Existence of closed polyhedral chain lemma}.  A closed polyhedral $1$-chain $P_n$ is just a sum of weighted piecewise linear parameterized closed curves.  That is, there exist piecewise linear parameterized closed curves $C_{n,i}$ and weights $\l_{n,i}\in \R$ such that
	\[
	\cint_{P_n}\o=\sum_i \l_{n,i} \int_{C_{n_i}}\o
	\]
	for all $\o\in \B_1(W)$.  Let $z_0\in B_\epsilon(z)$.  Then,
\[
\textrm{Ind}_J(z_0) =\frac{1}{2\pi i}\cint_J \frac{dw}{w-z_0} =  \lim_{n \to \i} \frac{1}{2\pi i}\cint_{P_n} \frac{dw}{w-z_0} = \lim_{n \to \i} \sum\l_{n,i}\textrm{Ind}_{C_{n,i}}(z_0).
\]
Since $\supp(P_n)\cap B_\epsilon(z)=\emptyset$, we know that $B_\epsilon(z)$ lies entirely within a connected component of $\supp(P_n)^c$.  Since the $C_{n,i}$ are piecewise smooth closed parameterized curves, the properties of the classical winding number hold.  In particular, $\textrm{Ind}_{C_{n,i}}(z_0)=\textrm{Ind}_{C_{n,i}}(z)$.  So,
\[
\lim_{n \to \i}\sum\l_{n,i}\textrm{Ind}_{C_{n,i}}(z_0)=\lim_{n \to \i}\sum\l_{n,i}\textrm{Ind}_{C_{n,i}}(z)=\frac{1}{2\pi i}\cint_{\lim_{n\rightarrow\i} P_n}\frac{dw}{w-z}=\frac{1}{2\pi i}\cint_{J} \frac{dw}{w-z}=\textrm{Ind}_J(z).
\]	
\end{proof}

%Winding number is zero on unbounded connected component
\begin{cor}
	\label{Winding number is zero on unbounded connected component}
	
	If $J\in \hB_1(\C)$ is closed and compactly supported, and $z$ is in the unbounded connected component of $\supp(J)^c$, then $\textrm{Ind}_J(z)=0$.
\end{cor}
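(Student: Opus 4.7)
The plan is to combine the previous theorem (winding number is constant on connected components of $\supp(J)^c$) with an easy estimate showing that $\mathrm{Ind}_J(z)$ tends to $0$ as $|z|\to\infty$. Once those two facts are in hand, the corollary is immediate: pick any $z$ in the unbounded component, translate along that component out to large $|z|$, and conclude the constant value is $0$.

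Concretely, first I would use the compact support of $J$ to choose a bounded open neighborhood $W\subset \C$ of $\supp(J)$ with $\overline{W}$ compact. For any $z$ with $|z|$ large enough to lie outside $\overline{W}$, the function $w\mapsto \tfrac{1}{w-z}$ belongs to $\B_0(W)$, so the integral $\cint_J \tfrac{dw}{w-z}$ is defined and equals $2\pi i \,\mathrm{Ind}_J(z)$. For such $z$ the unbounded connected component of $\supp(J)^c$ contains $z$, so it suffices to show this integral tends to $0$ as $|z|\to\infty$.

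Next I would verify $\tfrac{1}{w-z}\to 0$ in $\B_0^r(W)$ as $|z|\to\infty$. On a bounded set $W$, the $k$-th derivatives in $w$ of $(w-z)^{-1}$ are bounded, up to combinatorial constants, by $(\mathrm{dist}(W,z))^{-(k+1)}$, so every derivative up to order $r$ tends uniformly to $0$ on $W$. This is exactly convergence to $0$ in the Fr\'echet topology defining $\B_0^r(W)$, and hence in $\B_0(W)$. By continuity of the integral pairing $\cint : \hB_1(W)\times \B_1(W)\to \C$ (applied to the $1$-form $\tfrac{dw}{w-z} = \tfrac{1}{w-z}\,dw$ obtained by multiplying by the smooth $1$-form $dw$, using the identification of the integrand as a complex $1$-form as in Section~2), we conclude
\[
\mathrm{Ind}_J(z) \;=\; \frac{1}{2\pi i}\cint_J \frac{dw}{w-z} \;\longrightarrow\; 0 \qquad \text{as } |z|\to\infty.
\]

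Finally, Theorem \ref{Winding number constant on connected components} tells us $\mathrm{Ind}_J(\cdot)$ is constant on each connected component of $\supp(J)^c$. The unbounded component contains a sequence $z_n$ with $|z_n|\to\infty$, along which the winding number is simultaneously constant (equal to its value at any fixed $z$ in that component) and tending to $0$. Hence $\mathrm{Ind}_J(z)=0$ for every $z$ in the unbounded component. The only mildly delicate point is step (2)---checking that the derivative estimates for $(w-z)^{-1}$ really give convergence in the specific norm used to define $\hB_0^r(W)'\cong \B_0^r(W)$---but this is a routine calculation since $W$ is bounded and $z$ is bounded away from $\overline{W}$.
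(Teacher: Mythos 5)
Your proof is correct, but it takes a genuinely different route from the paper's. The paper reuses the closed polyhedral approximation $P_n \to J$ from Lemma \ref{Existence of closed polyhedral chain lemma}: since all the $P_n$ are supported in the bounded set $W$, a point $z$ of sufficiently large modulus lies in the unbounded component of every $\supp(P_n)^c$, so the classical vanishing of the winding number for piecewise linear closed curves gives $\mathrm{Ind}_J(z) = \lim_n \mathrm{Ind}_{P_n}(z) = 0$ at that point, and Theorem \ref{Winding number constant on connected components} propagates the value to the whole component. You instead prove decay at infinity directly: for $z$ far from a bounded neighborhood $W$ of $\supp(J)$, every derivative of $w \mapsto (w-z)^{-1}$ is $O(\mathrm{dist}(z,W)^{-(k+1)})$ uniformly on $W$, so $\frac{1}{w-z} \to 0$ in $\B_0^r(W)$, and continuity of the pairing (equivalently, the estimate $|\cint_J \o| \le \|J\|_{B^r}\|\o\|_{B^r}$) forces $\mathrm{Ind}_J(z) \to 0$; combined with the constancy theorem this finishes the proof. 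Both arguments ultimately rest on Theorem \ref{Winding number constant on connected components}, but yours avoids any further appeal to the polyhedral approximation and to classical winding-number facts, and it is quantitative, yielding an explicit bound $|\mathrm{Ind}_J(z)| \le \frac{1}{2\pi}\|J\|_{B^r}\,\|(w-z)^{-1}\|_{B^r(W)} = O(|z|^{-1})$; the paper's version is shorter given that the polyhedral machinery has already been set up for the preceding theorem. The one point you flag as delicate --- that uniform smallness of derivatives up to order $r$ on $W$ controls the relevant norm on $\B_0^r(W)$ --- is indeed the intended content of that norm, so your argument goes through.
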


\begin{proof}
	
	By the selection of $W$ in Lemma \ref{Existence of closed polyhedral chain lemma}, $\cup_n \supp(P_n)$ is bounded, and so we may choose $z$ in the unbounded component of $\supp(J)^c$ so that $z$ is also in the unbounded component of $\supp(P_n)$ for all $n$.  As in the proof of Theorem \ref{Winding number constant on connected components}, the classical properties of winding number hold for $P_n$.  In particular,
	\[
	\textrm{Ind}_J(z)=\lim_{n\to\i}\textrm{Ind}_{P_n}(z)=0.
	\]
\end{proof}

We now know that our winding number behaves as it should.  However, we can say even more:

The \emph{part of a chain in an open set} $J\lfloor_U$ is defined in \S 3, Lemma 3.0.5 of \cite{poincarelemma} where $J \in \hB_k(\R^n)$ and $U \subset \R^n$ is an open set that can be written as a union of non-overlapping n-rectangles taken from the set of all rectangles whose faces do not lie on hyperplanes of a certain null set depending on $J$.  In particular,  $J_{\lfloor B_\e(x)}$ is well-defined a.e. $\e$. 

The \emph{mass} $M(J)$ of a differential $k$-chain $J \in \hB_k^r(\R^n)$ is given by  $M(J) :=    \inf \{ \liminf \|A_i\|_{B^s}: A_i \to J \mbox{ in } \hB_k^r\}$  in \cite{poincarelemma} \S 2. 
\begin{defn}
	\label{density}
	Let $K\in \hB_n(\R^n)$ such that $M(K)<\i$.  The \emph{signed density} of $K$ at the point $x\in\supp(\p K)^c$ is defined to be the value
	\[
	\lim_{\e\rightarrow 0}\frac{1}{\textrm{vol}(B_\e)}\cint_{K_{\lfloor B_\e(x)}}dv.
	\]
	
	This was shown to be well-defined in \cite{dclecture}.
\end{defn}

%Winding number is equal to density
\begin{thm}
	\label{Winding number is equal to density}
	Let $J\in\hB_1(\C)$ be closed and compactly supported.  If $K\in \hB_2(\C)$ such that $\partial K=J$, $M(K)<\i$, and $z\in \supp(J)^c$, then $\textrm{Ind}_J(z)$ is equal to the signed density of $K$ at the point $z$.	
\end{thm}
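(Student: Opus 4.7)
The plan is to regularize the Cauchy kernel $\alpha := dw/(w-z)$, which is closed on $\C\setminus\{z\}$, by a smooth radial cutoff around $z$, apply Stokes' theorem to transfer the integral over $J$ to an integral over $K$, and then identify the result with the signed density of $K$ at $z$. Fix a smooth $\psi:[0,\i)\to[0,1]$ with $\psi\equiv 0$ on $[0,1/2]$ and $\psi\equiv 1$ on $[1,\i)$, and set $\psi_\e(w):=\psi(|w-z|/\e)$. For $\e$ smaller than the distance from $z$ to $\supp(J)$, the form $\psi_\e\alpha$ (extended by $0$ at $z$) lies in $\B_1(\C)$ and agrees with $\alpha$ on a neighborhood of $\supp(J)$, so by Stokes' Theorem (Theorem 5.2.4 of \cite{OCI}), together with $d\alpha=0$ on $\C\setminus\{z\}$ and $\p K=J$,
\[
\cint_J\frac{dw}{w-z} \;=\; \cint_J \psi_\e\alpha \;=\; \cint_{\p K}\psi_\e\alpha \;=\; \cint_K d(\psi_\e\alpha) \;=\; \cint_K d\psi_\e\wedge\alpha.
\]

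A direct computation in polar coordinates $(r,\theta)$ centered at $z$ gives $\alpha=dr/r+i\,d\theta$ and $d\psi_\e=\e^{-1}\psi'(r/\e)\,dr$, hence
\[
d\psi_\e\wedge\alpha \;=\; \frac{i\,\psi'(r/\e)}{\e r}\,dv.
\]
It therefore suffices to show that
\[
\lim_{\e\to 0}\cint_K \frac{\psi'(r/\e)}{\e r}\,dv \;=\; 2\pi D(z),
\]
where $D(z)$ denotes the signed density of $K$ at $z$.

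For this limit, let $F(\rho):=\cint_{K\lfloor_{B_\rho(z)}}dv$, defined for a.e.\ $\rho>0$ and satisfying $F(\rho)=\pi D(z)\rho^2+o(\rho^2)$ by Definition \ref{density}. The hypothesis $M(K)<\i$ lets me interpret $\cint_K g(|w-z|)\,dv$ for bounded radial $g$ as the Stieltjes integral $\int_0^\i g(\rho)\,dF(\rho)$. Integrating by parts (boundary terms vanish since $\psi'$ is compactly supported in $(0,\i)$) and substituting $u=\rho/\e$ gives
\[
\cint_K\frac{\psi'(r/\e)}{\e r}\,dv \;=\; -\int_0^\i \frac{F(\e u)}{\e^2 u^2}\bigl(u\psi''(u)-\psi'(u)\bigr)\,du.
\]
Replacing $F(\e u)$ by $\pi D(z)\e^2 u^2+o(\e^2)$ and using $\int_0^\i\psi'(u)\,du=1$ together with $\int_0^\i u\psi''(u)\,du=-1$ (a further integration by parts) yields the limit $-\pi D(z)\cdot(-2)=2\pi D(z)$, and hence $\cint_J dw/(w-z)=2\pi i D(z)$ as required.

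The hard part will be the final step: justifying the slicing identity $\cint_K g(|w-z|)\,dv=\int g(\rho)\,dF(\rho)$ rigorously within the differential chain framework (which should follow from the same finite-mass, a.e.-slicing machinery that underlies Definition \ref{density}), and controlling the $o(\e^2)$ remainder in $F(\e u)$ uniformly in $u$ over the compact support $[1/2,1]$ of $\psi'$ as $\e\to 0$. The Stokes and polar-coordinate steps are routine; everything subtle is concentrated in the passage from the density at a single point to the integral of a concentrating radial kernel.
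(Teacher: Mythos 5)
Your proposal is correct in its computations and reaches the theorem by a genuinely different route than the paper. The paper's proof is geometric: it replaces the given $K$ by the cone of $J$ over $z$ (using finite mass to argue this is the same $K$), approximates that cone by weighted $2$-simplices $k_{n_j}$ with common apex $z$, and observes that both $\textrm{Ind}_J(z)$ and the signed density are computed by the same limit $\lim_n \sum_{n_j} m_{n_j}\theta_{n_j}/2\pi$, where $\theta_{n_j}$ is the angle subtended at $z$ — so the identity ``winding number $=$ density'' becomes the elementary fact that a sector of angle $\theta$ contributes $\theta/2\pi$ both to the turning of its far edge around $z$ and to its area fraction in a small disk. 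Your argument instead regularizes the Cauchy kernel, pushes the integral onto $K$ by Stokes, and evaluates a concentrating radial kernel against $K$ via the distribution function $F(\rho)=\cint_{K\lfloor_{B_\rho(z)}}dv$. What this buys you: you work with the hypothesized $K$ directly and never need the cone construction, the uniqueness of $K$, Lemma \ref{Existence of closed polyhedral chain lemma}, or any polyhedral approximation; the entire argument is a smooth-form computation plus one measure-theoretic disintegration. What it costs: the slicing identity $\cint_K g(|w-z|)\,dv=\int_0^\i g(\rho)\,dF(\rho)$, which you correctly isolate as the one nonroutine step. It is not a gap so much as a lemma you must supply: $M(K)<\i$ is exactly the hypothesis under which the functional $\o\mapsto\cint_K \o$ in top dimension is represented by a signed Radon measure $\mu_K$ with $\mu_K(B_\rho(z))=F(\rho)$ for a.e.\ $\rho$ (the same machinery from \cite{dclecture} that makes Definition \ref{density} well posed), after which your identity is the standard pushforward formula for the radial map and the integration by parts is Stieltjes integration against the BV function $F$. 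Your second worry, uniformity of the $o(\e^2)$ remainder, is not actually an issue: since $\psi'$ and $\psi''$ are supported in $u\in[1/2,1]$, the substitution only evaluates $F(\e u)/(\e u)^2$ at radii $\rho=\e u\in[\e/2,\e]$, and the existence of the density limit gives $F(\rho)/\rho^2\to\pi D(z)$ along any such sequence (for the a.e.\ $\rho$ where $F$ is defined, which suffices for the Lebesgue--Stieltjes integral). One could tighten the write-up by noting that $\psi_\e\alpha$ is genuinely in (the complexification of) $\B_1(\C)$ because all its derivatives are bounded on $\C$, and that $\cint_J\psi_\e\alpha=\cint_J\alpha$ because the two forms agree on a neighborhood of $\supp(J)$.
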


\begin{proof}
	 Let $W$ be as in Lemma \ref{Existence of closed polyhedral chain lemma} and set $K$ equal to the $2$-chain constructed via the Poincar\'e lemma by coning $J$ over the point $z$.  From \cite{poincarelemma}, this choice of $K$ is unique. Set $K_n=\sum_{n_j}k_{n_j}\rightarrow K$, where $k_{n_j}$ are weighted 2-simplices with common base-point $z$.  We can choose this sequence $K_n$ such that the components of the boundaries of the $k_{n_j}$'s opposite $z$ are supported in $W$.  Let $m_{n_j}$ be the signed density of $k_{n_j}$, let $\theta_{n_j}$ be the angle subtended by $k_{n_j}$ at the point $z$ and let $l_{n_j}$ be the partial boundary of ${k_{n_j}}_{\lfloor B_\epsilon}$ opposite $z$.  Since $\lfloor_U$ and $\p$ are continuous, 
		\[
		\lim_{n\rightarrow \i}\sum_{n_j}l_{n_j}=\partial \left(K_{\lfloor B_\epsilon}\right).
		\]
		Since $\frac{1}{w-z}$ is holomorphic on a neighborhood of $\supp(K-K_{\lfloor B_\epsilon})$ (see \cite{poincarelemma}), it follows from Theorem
		\ref{Generalized Cauchy Integral Theorem} that
	\begin{align*}
		\textrm{Ind}_J(z)&=\frac{1}{2\pi i}\cint_{\partial (K_{\lfloor B_\epsilon})}\frac{dw}{w-z}\\
		&=\lim_{n\rightarrow\i}\sum_{n_j}\frac{1}{2\pi i}\cint_{l_{n_j}}\frac{dw}{w-z}\\
		&=\lim_{n\rightarrow\i}\sum_{n_j}\frac{m_{n_j}\theta_{n_j}}{2\pi},
	\end{align*}
	where the last integral is computed classically.  Likewise, the signed density of $K$ at $z$ is given by
	\begin{align*}
		\lim_{\epsilon\rightarrow 0}\frac{1}{\pi\epsilon^2}\cint_{K_{\lfloor B_\epsilon}} dx\,dy=\lim_{\epsilon\rightarrow 0}\lim_{n\rightarrow\i}\sum_{n_j}\frac{1}{\pi\epsilon^2}\cint_{{k_{n_j}}_{\lfloor B_\epsilon}}dx\,dy=\lim_{n\rightarrow\i}\sum_{n_j}\frac{m_{n_j}\theta_{n_j}}{2\pi},
	\end{align*}
	where the last integral is computed classically.  
\end{proof}

%Winding number zero implies poincare lemma for non-contractible sets
\begin{lem}              
	\label{Winding number zero implies poincare lemma for non-contractible sets}
	
	Suppose $J\in \hB_1(\C)$ is compactly supported and closed.  Suppose that $K\in \hB_2(\C)$ with $\p K= J$ satisfies $M(K)<\i$.  If $U$ is a bounded open set such that $J$ is supported in $U$ and if $\textrm{Ind}_J(w)=0$ for all $w\in U^c$, then $K$ is supported in $U$.
\end{lem}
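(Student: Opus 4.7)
The plan is to use Theorem \ref{Winding number is equal to density} to conclude that the signed density of $K$ vanishes not only at every point of $U^c$ but on an open neighborhood $V$ of $U^c$, and then use the identification of top-dimensional finite-mass chains with $L^1$ densities to translate this into the required support statement.

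For the first part, since $J$ is compactly supported we have $\delta := d(\supp(J), U^c) > 0$. I would take $V$ to be the open $\delta$-neighborhood of $U^c$; by construction $V \cap \supp(J) = \emptyset$, $V \supset U^c$, and every connected component of $V$ meets $U^c$ (each such component contains an open ball $B_\delta(z)$ with $z \in U^c$). By Theorem \ref{Winding number constant on connected components}, $\textrm{Ind}_J$ is constant on each connected component of $\supp(J)^c$, hence also on each component of $V \subset \supp(J)^c$; since every such component meets $U^c$, the hypothesis forces that constant value to be $0$. Applying Theorem \ref{Winding number is equal to density} pointwise at each $z \in V$, the signed density of $K$ vanishes identically on $V$.

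For the second part, a chain $K \in \hB_2(\C)$ with $M(K) < \infty$ is represented by an integrable function $\phi$ on $\C$ via $\cint_K (g\,dx\wedge dy) = \int_\C \phi\,g\,dx\,dy$. By the Lebesgue differentiation theorem, $\phi(z)$ equals the signed density of $K$ at $z$ at almost every $z$, so $\phi = 0$ almost everywhere on $V$. Consequently $\cint_K \omega = 0$ for every $\omega \in \B_2(\C)$ supported in $V$, and therefore $\supp(K) \cap V = \emptyset$. Since $V \supset U^c$, this gives $\supp(K) \subset V^c \subset U$, as desired.

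The main obstacle is the final step: one must invoke the identification of a finite-mass top-dimensional differential chain with an $L^1$ density on $\R^2$ and verify that the signed density of Definition \ref{density} agrees almost everywhere with the Lebesgue derivative of the $L^1$ representative. These measure-theoretic identifications should follow from the framework of \cite{poincarelemma, dclecture}; the remainder of the argument is a direct combination of Theorems \ref{Winding number constant on connected components} and \ref{Winding number is equal to density} together with the compactness of $\supp(J)$.
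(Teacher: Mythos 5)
Your argument is essentially the paper's own proof: the paper likewise deduces from Theorem \ref{Winding number is equal to density} that the signed density of $K$ vanishes outside $U$ and then concludes $\supp(K)\subset U$, citing \cite{dclecture} for exactly the density-to-support implication that you instead sketch via the $L^1$ identification and Lebesgue differentiation. Your additional step of propagating the vanishing from $U^c$ to an open neighborhood $V$ using Theorem \ref{Winding number constant on connected components} is a sound refinement of the same approach rather than a different route.
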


\begin{proof}
	
	The signed density of $K$ is zero outside $U$, hence $K$ is supported in $U$ (proved in \cite{dclecture}), whereby the lemma follows from Theorem \ref{Winding number is equal to density}.
\end{proof}

\section{Global Cauchy Integral Theorem for Differential Chains}
%Generalized Global Cauchy Integral Theorem
\begin{thm}{Global Cauchy Integral Theorem for Differential Chains}
	\label{Generalized Global Cauchy Integral Theorem}
	
	Let $J\in \hB_1(\C)$ be closed and supported in a bounded open set $U\subset \C$ such that $\textrm{Ind}_J(w)=0$ for all $w\in U^c$.  Suppose there exists some $K\in \hB_2(\R^2)$ with $M(K)<\i$ and $\p K=J$. Then if $f$ is holomorphic on $U$,
	\[
	\cint_J f(z)dz=0.
	\]
\end{thm}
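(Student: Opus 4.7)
The plan is to reduce this to the local Cauchy Integral Theorem (Theorem \ref{Generalized Cauchy Integral Theorem}) by using the hypothesis on winding numbers to promote the filling chain $K$ to one supported in $U$, where $f$ is holomorphic. The obstacle the hypothesis is designed to remove is exactly that, a priori, nothing prevents $K$ from protruding outside $U$ into regions where $f$ has no meaning, so Stokes' theorem alone does not immediately apply.

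First I would invoke Lemma \ref{Winding number zero implies poincare lemma for non-contractible sets}: the hypotheses of the present theorem are precisely its hypotheses ($J$ closed and compactly supported in $U$, $K \in \hB_2(\C)$ with $\p K = J$ and $M(K) < \infty$, and $\textrm{Ind}_J(w) = 0$ for all $w \in U^c$). The conclusion is that $K$ is supported in $U$. This is the main content of the proof: once $K$ lives inside $U$, everything in sight is a differential chain inside the domain of holomorphicity of $f$.

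Next, since $K \in \hB_2(U)$ and $f = u + iv$ is holomorphic on $U$, so that $u, v$ and their derivatives are smooth and locally bounded on any neighborhood of $\supp(K) \subset U$ with closure in $U$, the forms $u\,dx - v\,dy$ and $v\,dx + u\,dy$ are admissible for integration against $K$ and $\p K$. I would then mimic the computation in the proof of Theorem \ref{Generalized Cauchy Integral Theorem}: apply Stokes' Theorem for Differential Chains (Theorem 5.2.4 of \cite{OCI}) to each real and imaginary part to obtain
\[
\cint_J f(z)\,dz = \cint_K \left(\tfrac{\p u}{\p y} + \tfrac{\p v}{\p x}\right) dy\,dx + i \cint_K \left(\tfrac{\p v}{\p y} - \tfrac{\p u}{\p x}\right) dy\,dx,
\]
and then conclude that both integrands vanish identically on $U$ by the Cauchy-Riemann equations, giving $\cint_J f(z)\,dz = 0$.

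The only subtle point is the legality of invoking Stokes' theorem once $K$ is known to live in $U$: this requires that the forms $u\,dx - v\,dy$ and $v\,dx + u\,dy$ extend to elements of $\B_1(W)$ for some neighborhood $W$ of $\supp(K)$ with $\overline{W} \subset U$. Since $\supp(K)$ is compact (being contained in $\supp(K) \subset U$ by the previous step, with $J$ compactly supported and $K$ constrained by its finite mass to have compact support in $U$), such a $W$ can be chosen, and on $\overline{W}$ the holomorphic $f$ has all derivatives bounded, so the forms lie in $\B_1(W)$ as required. With this verified, Stokes' theorem applies as claimed, and the Cauchy-Riemann equations finish the proof.
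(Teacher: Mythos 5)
Your proof is correct and takes essentially the same route as the paper: invoke Lemma \ref{Winding number zero implies poincare lemma for non-contractible sets} to conclude that $K$ is supported in $U$, then repeat the Stokes'/Cauchy--Riemann computation from the proof of Theorem \ref{Generalized Cauchy Integral Theorem}. Your additional remarks on the admissibility of the forms over a neighborhood of $\supp(K)$ only make explicit what the paper leaves implicit.
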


\begin{proof}
	
	By Lemma \ref{Winding number zero implies poincare lemma for non-contractible sets}, there exists some $K$ supported in $U$ such that $J=\partial K$.  The proof is otherwise identical to that of Theorem \ref{Generalized Cauchy Integral Theorem}.
\end{proof}

\section{Global Cauchy Integral Formula for Differential Chains}
%Generalized Global Cauchy Integral Formula
\begin{thm}{Global Cauchy Integral Formula for Differential Chains}
	\label{Generalized Global Cauchy Integral Formula}
	
	Let $J\in \hB_1(\C)$ be closed and supported in a bounded open set $U\subset \C$ such that $\textrm{Ind}_J(w)=0$ for all $w\in U^c$.  Suppose there exists some $K\in \hB_2(\C)$ with $M(K)<\i$ and $\p K=J$.  Then if $f$ is holomorphic on $U$ and $z\in U\setminus \supp(J)$,
	\[
	f(z)\textrm{Ind}_J(z)=\frac{1}{2\pi i}\cint_J \frac{f(w)}{w-z}dw.
	\]
\end{thm}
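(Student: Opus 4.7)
The plan is to follow the same template as the proof of Theorem \ref{Generalized Cauchy Integral Formula}, simply substituting the Global Cauchy Integral Theorem (Theorem \ref{Generalized Global Cauchy Integral Theorem}) for its local counterpart. The key observation is that the hypotheses placed on $J$, $U$, and $K$ in the present statement are precisely those needed to invoke the Global version; no new machinery beyond the previous section is required.

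Concretely, I would introduce the standard auxiliary function
\[
g(w) := \begin{cases} \dfrac{f(w)-f(z)}{w-z}, & w \in U \setminus \{z\},\\[2pt] f'(z), & w = z, \end{cases}
\]
which is holomorphic on all of $U$ because $z$ is a removable singularity of the difference quotient. The pair $(J,g)$ then satisfies every hypothesis of Theorem \ref{Generalized Global Cauchy Integral Theorem}: $J$ is closed, supported in the bounded open set $U$, has $\textrm{Ind}_J(w)=0$ for all $w\in U^c$, admits the prescribed filling $K$ with $M(K)<\i$ and $\p K = J$, and $g$ is holomorphic on $U$. Applying that theorem gives
\[
\cint_J g(w)\,dw = 0,
\]
which expands using linearity of $\cint$ and the fact that $f(z)$ is a constant to
\[
\cint_J \frac{f(w)}{w-z}\,dw \;=\; f(z)\cint_J \frac{dw}{w-z} \;=\; 2\pi i\,f(z)\,\textrm{Ind}_J(z),
\]
the last equality being Definition \ref{Generalized Winding Number}. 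Dividing by $2\pi i$ produces the desired formula.

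The only subtlety I anticipate is confirming the integrability convention from Section 2: the real and imaginary parts of $g$ must restrict to elements of $\B_0(W)$ for some neighborhood $W$ of $\supp(J)$. Because $\supp(J)$ is closed and contained in the bounded set $U$ it is compact, and since $z\notin\supp(J)$ one can choose $W$ with $\overline{W}\subset U$ and $z\notin\overline{W}$; on such a $W$ the function $g$ is holomorphic with all derivatives bounded by compactness, so indeed $g \in \B_0(W)$. Beyond this routine verification, the statement really is a direct corollary of the Global Cauchy Integral Theorem combined with the removable-singularity trick, so I do not expect a genuine obstacle.
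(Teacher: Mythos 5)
Your proof is correct and is exactly the argument the paper intends: the paper's own proof is a one-line remark that the result follows from the Global Cauchy Integral Theorem in the same manner as the local Integral Formula followed from the local Integral Theorem, which is precisely the removable-singularity trick you spell out. Your added verification that $g$ restricts to $\B_0(W)$ on a suitable neighborhood $W$ of $\supp(J)$ avoiding $z$ is a reasonable explicit check of a point the paper leaves implicit.
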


\begin{proof}
	
	This follows from Theorem \ref{Generalized Global Cauchy Integral Theorem} in the same manner as Theorem \ref{Generalized Cauchy Integral Formula} followed from Theorem \ref{Generalized Cauchy Integral Theorem}.
\end{proof}

\section{Cauchy Residue Theorem for Differential Chains}
%Generalized Residue Theorem
\begin{thm}{Cauchy Residue Theorem for Differential Chains}
	\label{Generalized Residue Theorem}
	
	Let $J\in \hB_1(\C)$ be closed and supported in a bounded open set $U\subset \C$ such that $\textrm{Ind}_J(w)=0$ for all $w\in U^c$.  Suppose there exists some $K\in \hB_2(\C)$ with $M(K)<\i$ and $\p K=J$.  Let $f$ be holomorphic in $U$ except for at finitely many points $a_k\in U\setminus\supp(J)$.  Then,
	\[
	\cint_J f(z)dz=\sum_k \textrm{Ind}_J(a_k)\cint_{B_k} f(z)dz,
	\]
	where $B_k=\p D_k$ and the $D_k\ni a_k$ are isolated open neighborhoods.
\end{thm}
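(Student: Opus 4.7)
The plan is to reduce the statement to the Global Cauchy Integral Theorem (Theorem \ref{Generalized Global Cauchy Integral Theorem}) applied to a modified chain that subtracts off the contributions of the singularities. Set $n_k := \textrm{Ind}_J(a_k)$. After shrinking each $D_k$ if necessary (which leaves both sides of the claimed identity unchanged, by applying Theorem \ref{Generalized Cauchy Integral Theorem} to the annular region between the original and the shrunk $D_k$), we may assume each $D_k$ is a smoothly bounded disk with $\bar D_k \subset U$, the $\bar D_k$ pairwise disjoint, and each $\bar D_k$ disjoint from $\supp(J)$. Define
\[
J' := J - \sum_k n_k B_k \in \hB_1(\C).
\]

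The goal is to apply Theorem \ref{Generalized Global Cauchy Integral Theorem} to $J'$ on the bounded open set $V := U \setminus \{a_1,\dots,a_N\}$, where $f$ is holomorphic. I would verify the three hypotheses. Closedness: $\p B_k = \p\p D_k = 0$, so $\p J' = 0$. Support: $\supp(J') \subset \supp(J) \cup \bigcup_k \supp(B_k) \subset V$ by construction. Winding-number vanishing: by linearity of $\textrm{Ind}_{(\cdot)}(w)$,
\[
\textrm{Ind}_{J'}(w) = \textrm{Ind}_J(w) - \sum_k n_k \textrm{Ind}_{B_k}(w).
\]
For $w \in U^c$, the first term vanishes by assumption, while each $\textrm{Ind}_{B_k}(w) = 0$ by Corollary \ref{Winding number is zero on unbounded connected component}, since $w$ lies in the unbounded component of $\supp(B_k)^c$. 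For $w = a_j$, the classical winding number of $\p D_k$ about $a_j$ equals $\delta_{jk}$, and this identity transfers to the differential-chain setting via Theorem \ref{Winding number constant on connected components}, so $\textrm{Ind}_{J'}(a_j) = n_j - n_j = 0$.

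To supply a bounded-mass primitive, set $K' := K - \sum_k n_k D_k$, viewing each $D_k$ as its canonical differential $2$-chain. Then $\p K' = J - \sum_k n_k B_k = J'$, and $M(K') \le M(K) + \sum_k |n_k| M(D_k) < \infty$ since each $D_k$ is bounded. Theorem \ref{Generalized Global Cauchy Integral Theorem} applied to $J'$, $K'$, $V$, and $f$ yields $\cint_{J'} f(z)\,dz = 0$, which rearranges to the claimed identity.

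The main obstacle is the winding-number computation for the $B_k$: although $\textrm{Ind}_{\p D_k}(a_j) = \delta_{jk}$ is elementary for a classically parametrized circle, it must be lifted to the differential-chain framework. I would do this by approximating $B_k$ by closed polyhedral chains as in Lemma \ref{Existence of closed polyhedral chain lemma} and applying Theorem \ref{Winding number constant on connected components} to conclude the classical value persists. A secondary subtlety, namely that shrinking the $D_k$ alters neither side of the identity, is handled by Theorem \ref{Generalized Cauchy Integral Theorem} on the annular region between the original and shrunk disks.
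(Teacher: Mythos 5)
Your proof is correct and rests on the same decomposition as the paper's: both subtract $\sum_k \textrm{Ind}_J(a_k)$ copies of the canonical disk chains $D_k'$ from $K$ and reduce to Theorem \ref{Generalized Global Cauchy Integral Theorem}. The difference lies in how the hypotheses of that theorem are verified for the modified chain. The paper works at the level of the $2$-chain: it invokes Theorems \ref{Winding number constant on connected components} and \ref{Winding number is equal to density} to show that the signed density of $K$ on $D_k$ equals $\textrm{Ind}_J(a_k)$, so that $K-\sum_k \textrm{Ind}_J(a_k)D_k'$ is supported away from the singularities and $f$ is holomorphic near its support. You instead work at the level of the $1$-chain $J'=J-\sum_k n_k B_k$ and check the winding-number hypothesis on $V^c=U^c\cup\{a_1,\dots,a_N\}$ directly, using linearity of $\textrm{Ind}$, Corollary \ref{Winding number is zero on unbounded connected component} for points of $U^c$, and the classical value $\textrm{Ind}_{B_k}(a_j)=\delta_{jk}$ (which transfers to chains because $B_k$ is a classically represented circle, as noted after Definition \ref{Generalized Winding Number}). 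Your route avoids the density machinery of Theorem \ref{Winding number is equal to density} for this step and is arguably more explicit about why the Global theorem's winding-number hypothesis holds on $V^c$, which the paper leaves implicit; the paper's route makes the geometric cancellation of $K$ on the disks transparent. Two minor points: your appeal to Theorem \ref{Generalized Cauchy Integral Theorem} on the annulus between the original and shrunken $D_k$ is not literally licensed since an annulus is not contractible --- but since $B_k$ and its shrunken copy are classical circles, the classical deformation theorem (or Theorem \ref{Generalized Global Cauchy Integral Theorem}) covers this; and your subadditivity claim $M(K')\le M(K)+\sum_k|n_k|M(D_k')$ should be noted as following from the infimum-over-approximating-sequences definition of mass.
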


\begin{proof}
	
	Since $\textrm{Ind}_z(J)=0$ for all $z\in U^c$, it follows from Lemma \ref{Winding number zero implies poincare lemma for non-contractible sets} that there exists $K$ supported in $U$ such that $\p K= J$.  For each $k$, let $D_k$ be an open ball around $a_k$ such that the closures of the $D_k$'s are disjoint from each other and contained in $U\setminus \supp(J)$.  Let $B_k=\p D_k$.  Let $D_k'\in \hB_2(\C)$ correspond canonically to $D_k$ and let $B_k'=\p D_k'$.
	
	By Theorems \ref{Winding number constant on connected components} and \ref{Winding number is equal to density}, the signed density of $K$ is constant on connected components of $U\setminus \supp(J)$.  It follows that the signed density of $K$ at the point $a_k$ is equal to the signed density of $K$ on any point in $D_k$.  Thus, $\supp(K-\sum_k \textrm{Ind}_J(a_k)D_k')=\supp(K)\setminus(\cup_k D_k)$.  Therefore, $f$ is holomorphic on a neighborhood of $\supp{K-\sum_k \textrm{Ind}_J(a_k)D_k'}$.  By Theorem \ref{Generalized Global Cauchy Integral Theorem},
	\[
	\cint_{\partial(K-\sum_k \textrm{Ind}_J(a_k)D_k')}f(z)dz=0.
	\]
	Therefore,
	\[
	\cint_J f(z)dz=\cint_{\partial K}f(z)dz=\cint_{\partial(\sum_k \textrm{Ind}_J(a_k)D_k')}f(z)dz=\sum_k \textrm{Ind}_J(a_k)\cint_{B_k'}f(z)dz.
	\]
\end{proof}          

\bibliography{bib}{}
\bibliographystyle{amsalpha}
\end{document}